\documentclass[12pt,oneside]{amsart}

\setlength{\textwidth}{150mm}
\setlength{\textheight}{220mm}
\setlength{\oddsidemargin}{5.5mm}
\setlength{\evensidemargin}{5.5mm}

\usepackage{amsmath}
\usepackage{amssymb}
\usepackage{amscd}

\title{A hyperplane section theorem for Galois points and its application}
\author{Satoru Fukasawa}

\subjclass[2000]{14J70, 12F10}
\keywords{Galois point, hypersurface}
\address{Department of Mathematical Sciences, Faculty of Science, Yamagata University,  
Kojirakawa-machi 1-4-12, Yamagata 990-8560, Japan}
\email{s.fukasawa@sci.kj.yamagata-u.ac.jp} 
\thanks{The author was partially supported by JSPS KAKENHI Grant Numbers 22740001 and 16K05088.} 


\newtheorem{theorem}{Theorem}
\newtheorem{proposition}{Proposition}
\newtheorem{corollary}{Corollary}
\newtheorem{lemma}{Lemma}

\theoremstyle{definition}

\newtheorem{remark}{Remark}

\begin{document}
\begin{abstract} 
A point $P$ in projective space is said to be Galois with respect to a hypersurface if the function field extension induced by the projection from $P$ is Galois. 
We present a hyperplane section theorem for Galois points. 
 Precisely, if $P$ is a Galois point for a hypesurface, then $P$ is Galois for a general hyperplane section passing through $P$. 
As an application, we determine hypersurfaces of dimension $n$ with $n$-dimensional sets of Galois points. 
\end{abstract}
\maketitle

\section{Introduction}  
Let the base field $K$ be an algebraically closed field of characteristic $p \ge 0$ and let $X \subset \mathbb P^{n+1}$ be an irreducible and reduced hypersurface of dimension $n$ and of degree $d$. 
H. Yoshihara introduced the notion of the {\it Galois point} (see \cite{fukasawa1, miura-yoshihara, yoshihara1, yoshihara2, yoshihara3}). 
If the function field extension $K(X)/K(\mathbb P^n)$ induced by the projection $\pi_P:X \dashrightarrow \mathbb P^n$ from a point $P \in \mathbb P^{n+1}$ is Galois, then the point $P$ is said to be Galois.

Galois point theory has given a new viewpoint of classification of algebraic varieties, by the distribution of Galois points (see \cite{fukasawa1, fukasawa2, fukasawa-hasegawa, fukasawa-takahashi, miura-yoshihara, takahashi1, yoshihara1, yoshihara2, yoshihara3}). 
If the dimension of the singular locus $S_X$ is at most $n-2$, then Fukasawa and Takahashi \cite{fukasawa-takahashi} presented upper bounds for the number of Galois points, as a generalization of a result of Yoshihara for smooth hypersurfaces \cite{yoshihara3}.  
To do this, they showed a ``hyperplane section theorem'' (see \cite[Theorem 1.3]{fukasawa-takahashi}). 
In this article, we prove this theorem for {\it arbitrary} Galois points (which may be singular) with respect to {\it arbitrary} hypersurfaces. 
The intersection of (almost) all  tangent spaces of $X$ is denoted by $T_X$. 

\begin{theorem} \label{HyperplaneMain} 
Let $X \subset \mathbb P^{n+1}$ be a hypersurface of dimension $n \ge 2$ and degree $d \ge 3$ in characteristic $p \ge 0$, and let $P$ be a Galois point for $X$ with multiplicity $m$, where $0 \le m \le d-2$.  
Then: 
\begin{itemize}
\item[(i)] A general hyperplane $H$ passing through $P$ satisfies the following condition: 
\begin{itemize}
\item[$(\star)$] the hyperplane section $X_H:=X \cap H$ is an irreducible hypersurface in $H \cong \mathbb P^n$ of degree $d$ such that $S_{X_H}=S_X \cap H$, $P \not\in T_{X_H}$, and the multiplicity of $X_H$ at $P$ is equal to $m$. 
\end{itemize}
\item[(ii)] Let $H$ be a hyperplane passing through $P$ and satisfying the condition $(\star)$.
Then, the point $P$ is Galois for $X_H$. 
\item[(iii)] In this case, the Galois groups are isomorphic: $G_P(X) \cong G_P(X_H)$.  
\end{itemize}
\end{theorem} 

As an application, we generalize results of Fukasawa and Hasegawa \cite{fukasawa2, fukasawa-hasegawa} for plane curves with infinitely many Galois points. 
Let $\Delta(X)$ (resp. $\Delta'(X)$) be the set of all Galois points contained in $X \setminus S_X$ (resp. $\mathbb P^{n+1} \setminus X$). 

\begin{theorem} \label{InnerMain}
Let $X \subset \mathbb P^{n+1}$ be a hypersurface of dimension $n \ge 1$ and degree $d \ge 4$ in characteristic $p \ge 0$. 
Then, the following conditions are equivalent.  
\begin{itemize} 
\item[(i)] There exists a non-empty Zariski open subset $U$ of $X$ such that $U \subset \Delta(X)$. 
\item[(ii)] $p>0$, $d=p^e$ for some $e>0$, and $X$ is projectively equivalent to the hypersurface defined by $X_0^{p^e-1}X_1-X_2^{p^e}=0$. 
\end{itemize} 
In this case, $\Delta(X)=X \setminus \{X_0=X_2=0\}$, and the induced Galois group $G_P$ is a cyclic group of order $p^e-1$ for any point $P \in \Delta(X)$. 
\end{theorem}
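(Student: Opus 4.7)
The plan is to prove the two implications separately, with $(ii)\Rightarrow(i)$ being direct and $(i)\Rightarrow(ii)$ requiring a rigidity argument.

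For $(ii)\Rightarrow(i)$, I compute $\Sing(X)=\{X_0=X_2=0\}$ directly for $X: F=X_0^{p^e-1}X_1-X_2^{p^e}=0$, using $\partial F/\partial X_2=-p^e X_2^{p^e-1}=0$ in characteristic $p$. The point $P_0=[1:0:\ldots:0]\in X_{\rm sm}$ is Galois: the diagonal action $[X_0:X_1:X_2:\cdots]\mapsto[\zeta X_0:X_1:X_2:\cdots]$ for $\zeta\in\mu_{p^e-1}$ preserves $F$, fixes $P_0$ and every line through $P_0$, and is cyclic of order $p^e-1=\deg\pi_{P_0}$. To extend this structure to every other smooth point, I apply the $F$-preserving linear translations $T_{\alpha,\gamma}$ given by $X_1\mapsto X_1+\alpha^{p^e}X_0$, $X_2\mapsto X_2+\alpha X_0$, and $X_j\mapsto X_j+\gamma_j X_0$ for $j\geq 3$, whose invariance uses the Frobenius identity $(X_2+\alpha X_0)^{p^e}=X_2^{p^e}+\alpha^{p^e}X_0^{p^e}$. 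These act transitively on $X_{\rm sm}=X\setminus\{X_0=X_2=0\}$, so every smooth point inherits the Galois property.

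For $(i)\Rightarrow(ii)$, I first extract information from a single Galois point. The standard theory of Galois points on a hypersurface of degree $d\geq 3$ realizes $G_P$ as a finite subgroup of $\mathrm{PGL}_{n+2}(K)$ preserving $X$, fixing $P$, and fixing every line through $P$. The group of such linear transformations is the semidirect product $\mathbb{G}_a^{n+1}\rtimes\mathbb{G}_m$ stabilizing $P$. Classifying cyclic subgroups of order $d-1\geq 3$ gives two cases: (A) $G_P$ is conjugate to $\mu_{d-1}\subset\mathbb{G}_m$ (when $p\nmid d-1$), or (B) $G_P\subset\mathbb{G}_a^{n+1}$ is of order $p$ (forcing $d=p+1$). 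In case (A), choose coordinates with $P=[0:\ldots:0:1]$ and $G_P$ acting as $X_{n+1}\mapsto\zeta X_{n+1}$; writing $F=\sum_{i=0}^d g_i(X_0,\ldots,X_n)X_{n+1}^i$, invariance forces $g_i=0$ unless $(d-1)\mid i$, and combined with $g_d=0$ (so $P\in X$) and $g_{d-1}\neq 0$ (so $P\in X_{\rm sm}$) this yields $F=g_0(X_0,\ldots,X_n)+X_1 X_{n+1}^{d-1}$ after a linear change in $X_0,\ldots,X_n$.

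The heart of the proof is using the openness of $U$. Case (B) is ruled out by observing that its unipotent invariance $F(X,X_{n+1}+c\cdot X)=F(X,X_{n+1})$ yields additional terms in $F$ whose combined constraints preclude an $n$-dimensional family of Galois points. In case (A), I compare the normal form centered at $P$ with that centered at a nearby Galois point $P'\in U$: expanding $F$ along a smooth arc $t\mapsto P(t)\subset U$ through $P$ and matching the two presentations order by order in $t$ produces polynomial identities on $g_0$ that force it to be a pure $p^e$-th power of a linear form (a Frobenius image), with $d=p^e$. Relabeling coordinates yields $F=X_0^{p^e-1}X_1-X_2^{p^e}$. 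The remaining assertions about $\Delta(X)$ and $G_P$ then follow from the $(ii)\Rightarrow(i)$ computation applied to the normalized form.

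The main obstacle is the rigidity step: deducing from the openness of $U$ that $g_0$ is a pure $p^e$-th power, and simultaneously that $p>0$ and $d=p^e$. The mechanism is that in characteristic zero the emerging coefficient identities have no nontrivial solution for $d\geq 4$, whereas in characteristic $p$ they isolate exactly the Frobenius-compatible degrees; making this precise requires a careful Taylor expansion of $F$ around the moving base point $P(t)$ and a systematic coefficient comparison across the two presentations.
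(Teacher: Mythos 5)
Your implication (ii) $\Rightarrow$ (i) is correct and essentially matches the paper's computation (the paper verifies the cyclic Kummer extension $x_0^{p^e-1}=x_2^{p^e}$ at each smooth point after a coordinate translation; your transitive affine action plus the $\mu_{p^e-1}$-action at one point is the same argument organized differently). The problem is (i) $\Rightarrow$ (ii), where your proposal has two genuine gaps.

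First, your starting point --- that ``the standard theory'' realizes $G_P$ as a finite subgroup of $\mathrm{PGL}_{n+2}(K)$ fixing $P$ and every line through $P$ --- is unjustified and false in this generality. The elements of $G_P$ are a priori only \emph{birational} self-maps of $X$ commuting with $\pi_P$; they extend to linear transformations of $\Bbb P^{n+1}$ only under strong hypotheses (e.g.\ $X$ smooth, via Matsumura--Monsky or the classical result for smooth plane curves of degree $\ge 4$). The theorem makes no smoothness assumption, and indeed the hypersurface in (ii) is singular along $\{X_0=X_2=0\}$, so smoothness cannot be assumed without losing the theorem. The paper treats linearity of Galois-group elements as a nontrivial conclusion to be proved for specific equations (Proposition \ref{distribution}(iii)), never as an input; its whole strategy is designed to avoid this issue, by cutting with general hyperplanes (Proposition \ref{hyperplane-galois1}, Corollary \ref{reduction1}), inducting on $n$, and invoking the Fukasawa--Hasegawa classification of singular plane curves with infinitely many inner Galois points as the $n=1$ base case, followed by a Gauss-map/strange-center argument showing $X$ is a cone over the plane curve. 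Second, even granting linearity, the ``rigidity step'' --- deducing from the openness of $U$ that $p>0$, $d=p^e$, and $g_0$ is a $p^e$-th power --- is exactly the substance of the theorem, and your proposal offers no actual mechanism for it: the ``Taylor expansion along an arc and coefficient comparison'' is an IOU, not an argument (you acknowledge this yourself). There are also smaller inaccuracies along the way: $G_P$ need not be cyclic a priori, and your dichotomy (A)/(B) omits the mixed finite subgroups $(\Bbb Z/p\Bbb Z)^{\oplus k}\rtimes \Bbb Z/m\Bbb Z$ of the stabilizer $\Bbb G_a^{n+1}\rtimes\Bbb G_m$ in characteristic $p$. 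As it stands, the proposal proves only the easy direction.
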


\begin{theorem} \label{OuterMain}
Let $X \subset \mathbb P^{n+1}$ be a hypersurface of dimension $n \ge 1$ and degree $d \ge 3$ in characteristic $p \ge 0$. 
Then, the following conditions are equivalent. 
\begin{itemize} 
\item[(i)] There exist an irreducible Zariski closed subset $Y \subset \mathbb P^{n+1}$ of dimension $n$ and a non-empty open subset $U_Y$ of $Y$ such that $U_Y \subset \Delta'(X)$. 
\item[(ii)] $p>0$, $d=p^e$ for some $e>0$, and $X$ is projectively equivalent to an irreducible hypersurface whose equation is of the form $$\sum_{j=0}^{e}\sum_{i=1}^{n+1}\alpha_{ij}X_0^{p^e-p^j}X_i^{p^{j}}=0, $$
where $\alpha_{ij} \in K$.  
\end{itemize} 
In this case, $\Delta'(X)$ is a Zariski open set of a hyperplane (see Proposition \ref{distribution}), and the induced Galois group $G_P$ is isomorphic to $(\mathbb Z/p\mathbb Z)^{\oplus e}$ for any point $P \in \Delta'(X)$. 
\end{theorem}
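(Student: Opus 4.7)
For (ii) $\Rightarrow$ (i), I would dehomogenize at $X_0 = 1$ to obtain $L(x_1,\ldots,x_{n+1}) := \sum_{j,i} \alpha_{ij}\, x_i^{p^j} = 0$, which is additive in $(x_1,\ldots,x_{n+1})$. For a point $P = [0:a_1:\cdots:a_{n+1}]$ of the hyperplane $H = \{X_0 = 0\}$, parametrize the line from $P$ through $[1:x]$ by $t \mapsto [1:x+ta]$; its intersection with $X$ is given by $L(x) + \sum_{j=0}^{e} F_j(a)\, t^{p^j} = 0$ with $F_j(a) := \sum_i \alpha_{ij}\, a_i^{p^j}$. This is an additive polynomial in $t$ of degree $p^e$ whenever $F_e(a) \ne 0$, and separable whenever $F_0(a) \ne 0$. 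Its roots form an $\Bbb{F}_p$-vector space of dimension $e$ acting by translation on each fiber, so $K(X)/K(\Bbb{P}^n)$ is Galois with group $(\Bbb{Z}/p\Bbb{Z})^{\oplus e}$, and a non-empty Zariski open subset of $H$ lies in $\Delta'(X)$.

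For (i) $\Rightarrow$ (ii), I would first show $Y$ is contained in a hyperplane $H$ of $\Bbb{P}^{n+1}$. If $Y$ spanned $\Bbb{P}^{n+1}$, picking $n+2$ Galois points of $U_Y$ in linearly general position would yield simultaneous projective Galois constraints that are incompatible for $d \ge 3$, using the rigidity of $\mathrm{Aut}(X) \subset \mathrm{PGL}(n+2)$ for hypersurfaces of degree $\ge 3$ (Matsumura--Monsky). After a projective change of coordinates, take $H = \{X_0 = 0\}$.

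For a generic $P \in U_Y$, the action of $G_P$ on $X$ extends to $\mathrm{PGL}(n+2)$, fixes $P$, and preserves each line through $P$ (the fibers of $\pi_P$). In the affine chart $X_0 = 1$ any such $\sigma$ has the form $(x_1,\ldots,x_{n+1}) \mapsto (\mu x_1 + \ell(x_2,\ldots,x_{n+1}) + c,\, x_2,\ldots,x_{n+1})$ in coordinates adapted to $P$. Using the $n$-dimensional family of Galois points and the constancy of $|G_P| = d$, I would rule out scalar parts ($\mu \ne 1$) and conclude that each $G_P$ consists of translations, embedding $G_P$ as a finite subgroup of $\Bbb{G}_a^{n+1}$. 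In characteristic $0$ such a subgroup is trivial, contradicting $d \ge 3$; hence $p > 0$. Since finite subgroups of $\Bbb{G}_a^{n+1}$ in characteristic $p$ are elementary abelian, $d = p^e$ and $G_P \cong (\Bbb{Z}/p\Bbb{Z})^{\oplus e}$.

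Finally, writing the defining equation as $\sum_{k=0}^{d} X_0^{d-k}\, F_k(X_1,\ldots,X_{n+1})$ with $F_k$ homogeneous of degree $k$, the condition that $F(1, x+ta)$ is an additive polynomial in $t$ for every $[0:a] \in U_Y$ forces $F_k \equiv 0$ when $k$ is not a power of $p$ and $F_{p^j} = \sum_i \alpha_{ij}\, X_i^{p^j}$, yielding the normal form in (ii). The main obstacle is the combined first and second steps: showing $Y$ lies in a hyperplane and ruling out scalar elements in $G_P$ for every $P$ in a positive-dimensional family. Both require a delicate global analysis combining the Matsumura--Monsky rigidity with the way the family of projective Galois actions must glue on $X$; once past these, the characteristic and equation computations are essentially formal.
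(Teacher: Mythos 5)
Your direction (ii) $\Rightarrow$ (i) is correct and is essentially the paper's own argument (Proposition \ref{distribution}): after dehomogenizing, the fiber of $\pi_P$ is cut out by a separable additive polynomial in $t$ plus a constant, its root set is an $e$-dimensional $\Bbb F_p$-vector space acting by translations, and this gives the Galois property with group $(\Bbb Z/p\Bbb Z)^{\oplus e}$ on a dense open subset of the hyperplane $X_0=0$.

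The direction (i) $\Rightarrow$ (ii), however, has a genuine gap, located exactly at the two steps you yourself flag as ``the main obstacle.'' Your plan rests on the claim that $G_P$ embeds in $\mathrm{PGL}(n+2)$, justified by Matsumura--Monsky rigidity. But Matsumura--Monsky concerns automorphisms of \emph{smooth} hypersurfaces, and here $X$ cannot be assumed smooth: the hypersurfaces that actually occur in (ii) are singular in codimension one ($S_X$ has dimension $n-1$, Proposition \ref{distribution}(i)). Moreover, elements of $G_P$ are a priori only birational maps of $X$ (the paper even has to track their domains of definition, Lemma \ref{rigion}); that they are restrictions of linear transformations is, in the paper, a \emph{consequence} of the normal form (Proposition \ref{distribution}(iii)), established after the classification, so it cannot serve as an input to it. The same objection undercuts your first step ($Y$ spanning $\Bbb P^{n+1}$ gives ``incompatible projective constraints'' --- an assertion, not an argument) and your deduction that $p>0$ and $d=p^e$, both of which presuppose the translation structure. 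The paper's route avoids all of this: it proceeds by induction on $n$ using hyperplane sections, the key tools being Bertini (Lemma \ref{hyperplane section}) and the hyperplane-section theorem for outer Galois points (Proposition \ref{hyperplane-galois2}, proved via Lemmas \ref{rigion} and \ref{transitive}), which give $P \in \Delta'(X_H)$ and $G_P(X) \cong G_P(X_H)$ for suitable $H$; this reduces the whole statement to the case $n=1$, which is the previously established classification of plane curves with infinitely many outer Galois points \cite{fukasawa2}. Your sketch never invokes that plane-curve theorem, so even if the linearity claim were repaired, in the base case $n=1$ your argument would amount to reproving \cite{fukasawa2} from scratch --- which the sketch does not do.
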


\section{Preliminaries and Lemmas} 
Let $X \subset \mathbb P^{n+1}$ be an irreducible hypersurface, let $S_X$ be the singular locus of $X$, and let $X_{\rm sm}=X \setminus S_X$ be the smooth locus. 
The projective tangent space at a smooth point $P \in X\setminus S_X$ is denoted by $T_PX \subset \mathbb P^{n+1}$. 
Let $\check{\mathbb P}^{n+1}$ be the dual projective space which parameterizes hyperplanes of $\mathbb P^{n+1}$. 
The Gauss map $\gamma$ of $X$ is a rational map from $X$ to $\check{\mathbb P}^{n+1}$ which sends a smooth point $P$ to the tangent space $T_PX$. 
If $F$ is the defining polynomial of $X$, then $\gamma$ is given by $(\partial F/\partial X_0: \cdots: \partial F/\partial X_{n+1})$. 
Let $T_X:=\bigcap_{P \in X_{\rm sm}}T_PX$. 
If $T_X \ne \emptyset$, then $X$ is said to be strange and $T_X$ is called a strange center. 
A strange center is a linear space. 
It is well known that (the function field extension induced by) the projection from $T_X$ is separable if and only if $X$ is a cone with center $T_X$. 
Therefore, any strange variety is a cone with center $T_X$ if $p=0$. 

For distinct points $P, Q \in \mathbb P^{n+1}$, the line passing through $P$ and $Q$ is denoted by $\overline{PQ}$. 
For a point $P \in \mathbb P^{n+1}$, the projective space $\mathbb P^{n}$ parameterizes lines passing through $P$. 
Then, we can identify $\pi_P(Q)=\overline{PQ}$ for any point $Q \in X \setminus \{P\}$. 
If $P=(0:\cdots:0:1)$, then $\pi_P(X_0:\cdots:X_{n+1})=(X_0:\cdots:X_n)$, up to the projective equivalence of $\mathbb P^n$.      

We note the following Bertini theorem (see \cite[Theorem 1.1]{fulton-lazarsfeld}, or \cite[II. 6.1, Theorem 1]{shafarevich} and \cite[Lemma 5]{zariski}). 

\begin{lemma} \label{hyperplane section}
Let $P \not\in T_X$ (i.e., the projection $\pi_P: X \dashrightarrow \mathbb P^n$ is generically finite and separable). 
Then, for a general hyperplane $H \subset \mathbb P^{n+1}$ with $H \ni P$, the hyperplane section $X_H:=X \cap H$ is an integral scheme. 
\end{lemma}

Let $P$ be a Galois point for $X$. 
The Galois group induced by the Galois extension is denoted by $G_P(X)$, or simply $G_P$.   
Then, any element of $G_P(X)$ corresponds to a birational map from $X$ to itself. 
Let ${\rm Bir}(X)$ be the group consisting of all birational map from $X$ to itself. 
We can consider $G_P(X)$ as a subgroup of ${\rm Bir}(X)$. 
For $\sigma \in G_P(X)$, the maximal open subset of $X$ on which $\sigma$ is defined is denoted by $U_{\sigma}$.  
We set $U[P]=\bigcap_{\sigma \in G_P}U_{\sigma}$.  

\begin{lemma} \label{rigion}
Let $P \in \mathbb P^{n+1}$ be a Galois point and let $\sigma \in G_P$ be an induced birational map from $X$ to itself. 
Suppose that $H$ is a hyperplane such that $P \in H$ and $X_H$ is an integral scheme. 
Then, $X_H \cap U_{\sigma} \ne \emptyset$. 
Furthermore, if the multiplicity of $X_H$ at $P$ is less than $d$, then the restriction map $\sigma|_{X_H}$ is a birational map from $X_H$ to itself.  
\end{lemma}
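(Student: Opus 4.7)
The governing observation is that every $\sigma \in G_P$ commutes with $\pi_P$: one has $\pi_P \circ \sigma = \pi_P$ on $U_\sigma$, so for $Q \in U_\sigma$ the image $\sigma(Q)$ lies on the line $\overline{PQ}$. Since $P \in H$, every line through $P$ meeting $X_H$ is already contained in $H$, so $\sigma$ preserves $X_H$ set-theoretically wherever it is defined. This is the only geometric input; the rest is bookkeeping around the branch locus of $\pi_P$.

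For the first assertion I plan to use that the Galois extension $K(X)/K(\Bbb P^n)$ is separable, so the branch locus $B \subset \Bbb P^n$ of $\pi_P$ is a proper closed subset, $\pi_P$ is \'etale over $\Bbb P^n \setminus B$, and every deck transformation $\sigma$ extends to a morphism on $X \setminus \pi_P^{-1}(B)$; thus $X \setminus U_\sigma \subset \pi_P^{-1}(B)$. The lines of $H$ through $P$ form a linear $\Bbb P^{n-1} \subset \Bbb P^n$, and integrality of $X_H$ makes $\pi_P|_{X_H}: X_H \dashrightarrow \Bbb P^{n-1}$ a dominant finite morphism. It then suffices to exhibit one line $\ell \in \Bbb P^{n-1} \setminus B$: for such $\ell$, every point of $\ell \cap X = \ell \cap X_H$ lies in $X_H \cap U_\sigma$.

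For the second assertion I would apply the first part also to $\sigma^{-1} \in G_P$, giving $X_H \cap U_{\sigma^{-1}} \ne \emptyset$. The restrictions $\sigma|_{X_H \cap U_\sigma}$ and $\sigma^{-1}|_{X_H \cap U_{\sigma^{-1}}}$ both land in $X_H$ by the preservation property above, and they are mutually inverse on their common domain; hence $\sigma|_{X_H}$ is a birational self-map of $X_H$. The hypothesis $P \notin S_{X_H}$ enters here to keep the multiplicity of $X_H$ at $P$ at most one, so that $\deg(\pi_P|_{X_H})$ matches $\deg \pi_P$; otherwise distinct elements of $G_P$ could collapse to the same restriction and the inverse argument would fail.

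The step I expect to be the main obstacle is ruling out $\Bbb P^{n-1} \subset B$ under the sole assumption that $X_H$ is integral. In characteristic zero this is automatic, but in positive characteristic one must argue carefully that the separability of the ambient Galois cover $\pi_P$ propagates to $\pi_P|_{X_H}$.
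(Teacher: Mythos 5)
Your proof of the first assertion reduces everything to finding a line $\ell \subset H$ through $P$ that avoids the branch locus $B$, and you correctly flag ruling out $\check{\Bbb P}^{n-1} \subset B$ as the remaining obstacle; but this obstacle is fatal rather than technical, and it is not even ``automatic in characteristic zero.'' A line $\ell \subset H$ through $P$ lies in $B$ as soon as it meets $S_X$ or is tangent to $X$ at any point, and all such points lie on $X_H$. Two situations permitted by the Lemma's hypotheses make \emph{every} line of $H$ through $P$ branched: (a) $P \in X$ and $H = T_PX$ with $X_H$ integral (then every $\ell \subset H$ through $P$ meets $X$ at $P$ with multiplicity $\ge 2$) --- this already occurs for tangent hyperplane sections in characteristic zero; and (b) in characteristic $p$, $X_H$ strange with $P$ in its strange center, i.e.\ $P \in T_{X_H}$, in which case $\pi_P|_{X_H}$ is everywhere ramified. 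This is precisely why Propositions \ref{hyperplane-galois1} and \ref{hyperplane-galois2} add the hypotheses $P \notin S_{X_H}$, $P \notin T_{X_H}$, while Lemma \ref{rigion} does not: the Lemma must hold without them, and your method gives nothing in cases (a) and (b). The paper's argument avoids branch loci entirely: since $X_H$ is integral and is cut out on $X$ by a linear form, that linear form generates the maximal ideal of the local ring of $X$ at the generic point of $X_H$, so this local ring is a DVR, i.e.\ $X_H$ is a \emph{regular} codimension-one point of the scheme $X$; a rational map from $X$ to the projective (hence proper) variety $X$ is then defined at any such point by the valuative criterion of properness (as in Hartshorne, V, Lemma 5.1). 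Hence $U_\sigma$ contains the generic point of $X_H$ and in particular meets $X_H$, with no separability input whatsoever.

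For the second assertion there is a further gap, together with a misdiagnosis of where $P \notin S_{X_H}$ is used. Knowing that $X_H \cap U_\sigma$ and $X_H \cap U_{\sigma^{-1}}$ are nonempty and that both restrictions map into $X_H$ does not make them mutually inverse rational maps of $X_H$: a birational self-map of $X$ may perfectly well \emph{contract} the divisor $X_H$ (say, into the locus where $\sigma^{-1}$ is undefined), and then $\sigma|_{X_H}$ is not dominant, hence not birational, even though it is defined somewhere. Ruling out this contraction is the actual content of the assertion, and it is exactly where $P \notin S_{X_H}$ enters in the paper: if $\sigma(U[P] \cap H)$ were contained in $(X \cap H) \setminus (U[P] \cap H)$, which has dimension $\le n-2$, then the general fiber of $\sigma|_{X_H}$ would be positive-dimensional; since $\sigma$ preserves each line through $P$, such a fiber lies in a line through $P$, forcing lines through $P$ to be contained in $X_H$, so that $X_H$ is a cone with vertex $P$ and $P \in S_{X_H}$, a contradiction. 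Your proposed role for $P \notin S_{X_H}$ --- matching $\deg(\pi_P|_{X_H})$ with $\deg \pi_P$ so that distinct elements of $G_P$ do not get identified --- concerns injectivity of the restriction homomorphism $G_P \to {\rm Bir}(X_H)$, which is the business of Propositions \ref{hyperplane-galois1} and \ref{hyperplane-galois2}, not of this lemma.
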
 

\begin{proof} 
For the first assertion, see the proof of \cite[V. Lemma 5.1]{hartshorne}. 
Now, $X_H$ corresponds to a regular point of codimension one in the scheme $X$.  
We can prove the first assertion by using a valuative criterion of properness for $X$. 
We prove the second assertion. 
Let $\tau$ be the inverse of $\sigma$. 
Then, $\sigma$ is an isomorphism from $U[P] \cap \sigma^{-1}U[P]$ to $U[P] \cap \tau^{-1}U[P]$. 
By definitions of $\pi_P$ and $\sigma$, $\sigma(X_H) \subset X_H$. 
If $U[P] \cap \sigma^{-1}U[P] \cap H \ne \emptyset$, we have the conclusion. 
Assume that $U[P] \cap \sigma^{-1}U[P] \cap H =\emptyset$. 
Then, $\sigma(U[P] \cap H) \subset (X \cap H) \setminus (U[P] \cap H)$. 
Since $U[P] \cap H$ is of dimension $n-1$ and $(X \cap H) \setminus (U[P] \cap H)$ is of dimension $\le n-2$, $\sigma^{-1}(\sigma(Q))$ is one-dimensional for a general point $Q$ of $X \cap H$. 
By definitions of $\pi_P$ and $\sigma$, $\overline{PQ} \subset X \cap H$.  
Then, $X_H$ is a cone with center $P$. 
Therefore, the multiplicity of $X_H$ at $P$ is equal to $d$. 
This is a contradiction. 
\end{proof}

\begin{lemma} \label{transitive}
Let $P$ be a Galois point for $X$ with multiplicity $m$ and let $Q, R \in X$ be points such that $\pi_P(Q)=\pi_P(R)$ and the intersection multiplicity of $X$ and $\overline{QR}$ at $P$ is equal to $m$. 
Assume that $Q \in U[P]$. 
Then, there exists $\sigma \in G_P$ such that $\sigma(Q)=R$.  
\end{lemma}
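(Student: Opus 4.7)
My plan is to show that the orbit $G_P \cdot Q$ coincides set-theoretically with the fibre $\pi_P^{-1}(\pi_P(Q))$, which contains $R$ by hypothesis. Since $\pi_P \circ \sigma = \pi_P$ as rational maps for every $\sigma \in G_P$, and since the assumption $Q \in U[P]$ ensures that $\sigma(Q)$ is a well-defined point of $X$ for each $\sigma \in G_P$, the inclusion $G_P \cdot Q \subseteq \pi_P^{-1}(\pi_P(Q))$ is automatic. Set-theoretically this fibre equals $(L \cap X) \setminus \{P\}$ in the inner case and $L \cap X$ in the outer case, where $L := \overline{PQ}$. Bezout applied in $\Bbb P^{n+1}$, together with the multiplicity-one intersection of $L$ and $X$ at $P$ forced by $Q \notin T_PX$ (which gives $L \not\subset T_PX$), bounds this fibre by $|G_P| = \deg(\pi_P)$ distinct points.

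The heart of the proof is thus to show $G_P$ acts transitively on this fibre. In the spirit of Lemma \ref{rigion}, I would iteratively slice by a general hyperplane containing $L$: Lemma \ref{hyperplane section} guarantees that the section remains integral, Lemma \ref{rigion} lets every $\sigma \in G_P$ restrict to a birational self-map of the section, and a fibre-size count shows that the restriction homomorphism from $G_P$ to the Galois group of the restricted projection is bijective (injectivity is because two elements agreeing on a dense open subset agree on $X$; surjectivity follows by comparing degrees). After $n-1$ such cuts, we arrive at an irreducible curve $C$ through $P$, $Q$, $R$ on which $G_P$ acts as the Galois group of the degree-$|G_P|$ covering $\pi_P|_C : C \dashrightarrow \Bbb P^1$. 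For Galois coverings of curves the Galois group acts transitively on every fibre by the classical Hilbert decomposition theory for function fields in one variable, yielding some $\sigma \in G_P$ with $\sigma(Q) = R$.

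The main obstacle is managing the cumulative genericity of the hyperplane choices: at each stage we need the resulting section to be integral, we need $P$ to remain in its smooth locus so that Lemma \ref{rigion} continues to apply, and we need $Q$ and $R$ to remain outside the tangent space of the restricted variety at $P$ so the fibre-counting argument still gives the bound $|G_P|$. Each of these conditions holds on a dense open subset of the pencil of hyperplanes through $L$ at every step, so a sufficiently generic choice secures them simultaneously; a secondary bookkeeping issue is verifying that the condition $Q \in U[P]$ descends, which follows because each restricted $\sigma$ remains defined at $Q$ by construction. Once these verifications are in place, the transitivity on the restricted curve transfers back to $X$ through the bijection of Galois groups, completing the argument.
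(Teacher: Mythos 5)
Your overall strategy is genuinely different from the paper's, but it has two gaps that I do not think you can close without substantially new arguments. The first is the Bertini input. Lemma \ref{hyperplane section} produces integral sections only for a \emph{general} hyperplane, with no incidence constraint; you need integrality for general hyperplanes \emph{containing the fixed line} $L=\overline{PQ}$, and no theorem in the paper gives this. It is genuinely false for arbitrary pairs $(X,L)$: if $X$ is a cone and $L$ passes through its vertex, then every hyperplane through $L$ cuts $X$ in a cone over a finite scheme, i.e.\ a union of lines, which is never integral for $d\ge 2$; and for $n=2$ your family of hyperplanes through $L$ is a pencil, for which Bertini-type irreducibility statements simply do not apply (pencils with all members reducible exist). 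Cones are not a pathology you can wave away here: the hypersurfaces at the heart of this paper ($X_0^{p^e-1}X_1-X_2^{p^e}=0$ and the outer-type equations in few variables) are cones with plenty of Galois points, so the lemma must hold for them. One can in fact check that the lemma's hypotheses ($Q,R\notin T_PX$, $L\not\subset X$, etc.) exclude the worst configurations, but that verification is itself a nontrivial argument which your proposal does not contain; the claim that all needed conditions ``hold on a dense open subset of the pencil'' is exactly what has to be proved.

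The second gap is worse: your injectivity argument for the restriction homomorphism is wrong. You assert that two elements of $G_P$ agreeing on the section agree on $X$ ``because they agree on a dense open subset,'' but $X_H$ is a divisor in $X$, hence nowhere dense, and a priori two distinct birational maps commuting with $\pi_P$ can agree on it. This injectivity is precisely the delicate content of Propositions \ref{hyperplane-galois1} and \ref{hyperplane-galois2}, and note the order of logic in the paper: those propositions are proved \emph{using} Lemma \ref{transitive} (the fibre-counting step ``$X_H\cap l$ has at most $d-2$, resp.\ $d-1$, points'' is deduced from it). So you cannot repair your argument by appealing to a hyperplane-section principle of that kind without circularity; you would need an independent proof, e.g.\ via decomposition/inertia groups of the prime divisor $X_H$ over its image hyperplane. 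By contrast, the paper's proof needs none of this machinery: it places $L=\{X_1=\cdots=X_n=0\}$, $P=(0{:}\cdots{:}0{:}1)$, $R=(1{:}0{:}\cdots{:}0)$, and computes the norm $g=\prod_{\sigma\in G_P}\sigma^*x_{n+1}$, which is $G_P$-invariant and hence equals, by Vieta, $(-1)^d a_0/a_d$ if $P\notin X$ or $(-1)^{d-1}a_0/a_{d-1}$ if $P\in X_{\rm sm}$; the hypotheses $Q\in U[P]$ and $R\notin T_PX$ are exactly what make the evaluation $g(Q)=\prod x_{n+1}(\sigma(Q))$ legitimate and the denominator nonzero, and then $g(Q)=0$ forces $x_{n+1}(\sigma(Q))=0$, i.e.\ $\sigma(Q)=R$, for some $\sigma$. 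I would encourage you to compare this one-page computation with the amount of slicing, genericity bookkeeping, and Galois-group transfer your route requires even before its gaps are filled.
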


\begin{proof} 
Let $(X_0:\cdots:X_{n+1})$ be a system of homogeneous coordinates, let $x_i=X_i/X_0$ for $i=1, \ldots, n+1$, and let $F$ be the defining homogeneous polynomial of $X$. 
We can assume that $P=(0:\cdots:0:1)$ and $R=(1:0:\cdots:0)$ for a suitable system of coordinates. 
Then, the line passing through $P, Q, R$ is given by $X_1=\cdots=X_{n}=0$. 
Then, we can take $\pi_P(1:x_1:\cdots:x_{n+1})=(1:x_1:\cdots:x_n)$ and $\pi_P(Q)=\pi_P(R)=(1:0:\cdots:0)$. 
We have
$$F(X_0, \ldots, X_{n+1})=A_{d-m}X_{n+1}^{d-m}+\cdots+A_1X_{n+1}+A_0, $$ 
where $A_i \in K[X_0, \ldots, X_n]$. 
We define $f(x_1, \ldots, x_{n+1}):=F(1, x_1, \ldots, x_{n+1})$ and $a_i(x_1, \ldots, x_n):=A_i(1, x_1, \ldots, x_n)$.  
Since $f(R)=0$, $a_0(0, \ldots, 0)=0$. 
Since the intersection multiplicity of $X$ and $\overline{QR}$ at $P$ is $m$, $a_{d-m} (0, \ldots, 0)=A_{d-m}(1, \ldots, 0) \ne 0$. 

We consider a function $x_{n+1}$. 
Let $g=\prod_{\sigma} \sigma^{*}x_{n+1}$.
Then, we have $g(Q)=((-1)^{d-m}a_0/a_{d-m})(Q)=0$. 
Assume that $\sigma(Q) \ne R$ for any $\sigma \in G_P$. 
Then, $x_{n+1}(\sigma(Q)) \ne 0$ for any $\sigma \in G_P$. 
Therefore, $g(Q) \ne 0$. 
This is a contradiction. 
\end{proof}

\section{Proof of Theorem \ref{HyperplaneMain}}
\begin{proof}[Proof of Theorem \ref{HyperplaneMain}]
Suppose that $P$ is Galois for $X$. 
Then, $P \not\in T_X$. 
By Lemma \ref{hyperplane section}, for a general hyperplane $H \ni P$, $X_H$ is an integral scheme. 
Let $W_P \subset \check{\mathbb{P}}^{n+1}$ be the set of such hyperplanes.
Since $P \not\in T_X$, $W_P \setminus \gamma(X_{\rm sm}) \ne \emptyset$.  
We have $S_{X_H}=S_X \cap H$ for any $H \in W_P \setminus \gamma(X_{\rm sm})$, because, for a point $Q \in X_{\rm sm} \cap H$, $T_QX=H$ if and only if $X \cap H$ is singular at $Q$.   
Let $U_P \subset X_{\rm sm}$ be the set of all smooth points $Q$ such that the differential map of the projection $\pi_P$ at $Q$ is surjective, and let $\Sigma_P \subset \check{\mathbb P}^{n+1}$ be the set of all hyperplanes $H$ such that $H \subset (X \setminus U_P)$. 
Since $\pi_P$ is separable, $W_P \setminus \Sigma_P \ne \emptyset$. 
We have $P \not\in T_{X_H}$ for any $H \in W_P\setminus (\gamma(X_{\rm sm}) \cup \Sigma_P)$. 
Let $\Gamma_P$ be the (finite) set of all hyperplanes $H$ such that the multiplicity of $X \cap H$ at $P$ is less than $m$.  
We have $W_P \setminus (\gamma(X_{\rm sm}) \cup \Sigma_P \cup \Gamma_P) \ne \emptyset$, and any hyperplane $H$ in this set satisfies $(\star)$.   
We have assertion (i). 

Let $H$ be a hyperplane passing through $P$ and satisfying $(\star)$. 
We consider a homomorphism of groups 
$$ \phi: G_P \rightarrow G; \sigma \mapsto \sigma|_{X_H},  $$
where $G=\{ \sigma \in {\rm Bir}(X_H)|\sigma(X_H \cap l \setminus \{P\}) \subset X_H \cap l \mbox{ for a general line } l \mbox{ such that } P \in l \subset H  \}$. 
Since the multiplicity of $X_H$ at $P$ is $m <d$, it follows from Lemma \ref{rigion} and the definition of a Galois point that $\phi$ is well-defined. 
In addition,  by the condition $P \not\in T_{X_H}$, $X_H \cap l \setminus \{P\}$ consists of $d-m$ points for a general line $l \subset H$ containing $P$.   
It follows from Lemma \ref{transitive} that $\phi$ is injective. 
Since the order of $G$ is at most $d-m$, $\phi$ is an isomorphism. 
Then, $P$ is Galois for $X_H$. 
\end{proof} 

\begin{remark} 
A general hyperplane section for a Galois point which is {\it singular} was studied by T. Takahashi in his Ph.D. thesis \cite{takahashi2}. 
He proved that a Galois point $P \in S_X$ is also Galois for a general hyperplane section $X_H \ni P$, under the assumption that $p=0$ and $X \subset \mathbb{P}^3$ is a normal surface. 
\end{remark}

\section{Case of inner Galois points} 
As an application of Theorem \ref{HyperplaneMain}, we have the following. 

\begin{corollary} \label{reduction1} 
Assume that the condition (i) in Theorem \ref{InnerMain} holds. 
Then, for a general hyperplane $H$, there exists a non-empty Zariski open set $U_{X_H} \subset X_H$ such that $U_{X_H} \subset \Delta(X_H)$.
\end{corollary}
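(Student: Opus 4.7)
The plan is to invoke Proposition \ref{hyperplane-galois1} pointwise on a general hyperplane section. Specifically, I show that for a general hyperplane $H$ the subset
$$U_{X_H} := (U \cap H) \setminus (S_{X_H} \cup T_{X_H}) \subset X_H$$
is non-empty and open; together with Proposition \ref{hyperplane-galois1} this immediately gives $U_{X_H} \subset \Delta(X_H)$.

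By Lemma \ref{hyperplane section}, a general $H$ makes $X_H$ an integral scheme. Since $U$ is a non-empty Zariski open subset of the $n$-dimensional irreducible variety $X$, which is not contained in any hyperplane (as $d \ge 4$), a standard dimension count shows that for general $H$ the intersection $U \cap H$ is a non-empty open subset of $X_H$ of dimension $n-1$. To see that $U \cap H$ is not contained in $S_{X_H} \cup T_{X_H}$: integrality of $X_H$ forces $S_{X_H}$ to be a proper closed subset of $X_H$; and since $X_H$ is an integral hypersurface in $H \cong \Bbb P^n$ of degree $d \ge 4$, it is not a linear space, so either $T_{X_H}$ is empty or $T_{X_H}$ is a proper linear subspace of $H$ with $T_{X_H} \cap X_H$ a proper closed subset of $X_H$. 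Thus $U_{X_H}$, obtained from the non-empty open set $U \cap H$ by removing two proper closed subsets of the irreducible variety $X_H$, is non-empty and open.

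Each $P \in U_{X_H}$ then lies in $\Delta(X) \cap H$ (since $U \subset \Delta(X)$) and avoids $S_{X_H} \cup T_{X_H}$, with $X_H$ integral, so Proposition \ref{hyperplane-galois1} yields $P \in \Delta(X_H)$. The only mildly delicate point is choosing a single $H$ satisfying all these open conditions simultaneously; each being generic, their intersection remains non-empty.
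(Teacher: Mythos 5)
Your proof is correct and takes essentially the same route as the paper: the paper also sets $U_{X_H}=(U \cap X_H) \setminus (S_{X_H} \cup T_{X_H})$ and applies Proposition \ref{hyperplane-galois1} pointwise, relying on Lemma \ref{hyperplane section} and the genericity of $H$ for integrality and $U \cap H \ne \emptyset$. You merely spell out the non-emptiness and properness arguments that the paper leaves implicit.
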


\begin{proof}
Let $U$ be an open set as in Theorem \ref{InnerMain}(i) and let $H$ be a general hyperplane. 
Since $H$ is general, we can assume that $U \cap H \ne \emptyset$. 
By Lemma \ref{hyperplane section}, $X_H$ is an integral scheme and $H$ satisfies the condition $(\star)$ for a general point $P \in X_H$.  
By Theorem \ref{HyperplaneMain}(ii), we can take $U_{X_H}=(U \cap X_H) \setminus (S_{X_H} \cup T_{X_H})$. 
\end{proof}

\begin{proof}[Proof of Theorem \ref{InnerMain}] 
We prove the implication (i) $\Rightarrow$ (ii). 
We use induction on dimension $n$. 
If $n=1$, then the assertion is nothing but a result of Fukasawa and Hasegawa \cite{fukasawa-hasegawa}. 
We consider the case where $n \ge 2$. 
Let $H \subset \mathbb P^{n+1}$ be a general hyperplane. 
It follows from Corollary \ref{reduction1} that there exists a Zariski open set $U_{X_H} \subset X_H$ such that $U_{X_H} \subset \Delta(X_H)$. 
By the assumption of induction, $p>0$, $d$ is a power of $p$, and $X_H \subset H \cong \mathbb P^{n}$ is projectively equivalent to $X_0^{d-1}X_1-X_2^d=0$.  
The Gauss map $\gamma_{X_H}$ for $X_H$ is given by $(-X_0^{d-2}X_1:X_0^{d-1}:0:\cdots:0)$, if $X_H \subset \mathbb P^n$ is given by $X_0^{d-1}X_1-X_2^d=0$. 
Therefore, we find that $T_{X_H}$ is a linear space of dimension $n-2$ with $T_{X_H} \not\subset X_H$, and a general fiber of $\gamma_{X_H}$ is a linear space of dimension $n-2$. 
Since $H$ is general and $T_QX_H=T_QX \cap H$ for any smooth point $Q \in X_H$, $T_X$ is a linear space of dimension $n-1$ with $T_X \not\subset X$, $\gamma_{X_H}$ coincides with the restriction map $\gamma|_{X_H}$, and a general fiber of $\gamma$ is a linear space of dimension $n-1$.

Let $P \in X$ be a general point. 
Since the linear spaces $\gamma^{-1}(\gamma (P))$ of dimension $n-1$ and $T_X$ of dimension $n-1$ are contained in the projective space $T_PX$ of dimension $n$, these intersect along a linear space $L_P$ of dimension $n-2$. 
If $L_{P'} \ne L_{P}$ for a general point $P' \in X$, then $T_X$ is contained in $X$, since $L_{P'} \subset T_X$. 
This is a contradiction. 
Therefore, $L_{P'}=L_P$. 
Then, $X$ is a cone with a $(n-2)$-dimensional center $L$. 
For a suitable system of coordinates, we can assume that $L$ is defined by $X_0=X_1=X_2=0$ and $X$ is defined by $F(X_0, X_1, X_2)=0$. 
We can assume that $H$ is defined by $X_{n+1}-(a_0X_0+\cdots+a_{n}X_n)=0$. 
Then, $X_H$ is given by the same equation $F=0$ and there exists a linear transformation $\phi:H \rightarrow H$ such that $\phi(X_H)$ is defined by $F_1:=X_0^{d-1}X_1-X_2^d=0$. 
Then, $\phi(L \cap H)=L \cap H$. 
Therefore, $\phi$ gives an automorphism of the sublinear system $\langle X_0, X_1, X_2 \rangle$ of $H^0(\mathbb P^n, \mathcal{O}(1))$ which is given by $L \cap H$. 
This implies that $X$ is projectively equivalent to the hypersurface defined by $X_0^{d-1}X_1-X_2^d=0$. 

We consider the implication (ii) $\Rightarrow$ (i). 
Let $F=X_0^{p^e-1}X_1-X_2^{p^e}$ be the defining polynomial. 
It is not difficult to check that the singular locus $S_X$ of $X$ is given by $X_0=X_2=0$. 
Let $P \in X \setminus S_X$. 
Then, $P=(1:b_1: \cdots: b_{n+1})$ for some $b_1, \ldots, b_{n+1} \in K$ with $b_1=b_2^{p^e}$. 
The projection $\pi_P$ is given by $(X_1-b_1X_0:\cdots:X_{n+1}-b_{n+1}X_0)$. 
Let $\hat{X_i}=X_i-b_iX_0$. 
Then, $F(X_0, \hat{X_1}+b_1X_0, \ldots, \hat{X}_{n+1}+b_{n+1}X_0)=(\hat{X}_1+b_1X_0)X_0^{p^e-1}-(\hat{X_2}+b_2X_0)^{p^e}=\hat{X}_1X_0^{p^e-1}-\hat{X}_2^{p^e}=F(X_0, \hat{X}_1, \ldots, \hat{X}_n)$. 
Then, $\pi_P=(X_0:\hat{X}_1:\cdots:\hat{X}_{n+1})$. 
Therefore, we have a field extension $K(x_0, x_2, \ldots, x_{n+1})/K(x_2, \ldots, x_{n+1})$ with a relation $F(x_0, 1, x_2, \ldots, x_{n+1})=x_0^{p^e-1}-x_2^{p^e}=0$. 
It is not difficult to check that this is a Galois extension, which is cyclic of degree $p^e-1$. 
Therefore, we have $\Delta(X)=X \setminus \{X_0=X_2=0\}$. 
\end{proof}

\section{Case of outer Galois points} 
As an application of Theorem \ref{HyperplaneMain}, we have the following. 

\begin{corollary} \label{reduction2}
Assume the condition (i) in Theorem \ref{OuterMain} holds. 
Then, for a general hyperplane $H$, there exists a non-empty Zariski open set $U_{Y_H} \subset Y_H$ such that $U_{Y_H} \subset \Delta'(X_H)$. 
\end{corollary}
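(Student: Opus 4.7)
The plan is to set $U_{Y_H}:=(U_Y \cap Y_H) \setminus T_{X_H}$ and verify that this is a non-empty open subset of $Y_H$ contained in $\Delta'(X_H)$. For a general hyperplane $H$, Lemma \ref{hyperplane section} applied both to $X$ and to the irreducible hypersurface $Y$ ensures that $X_H$ and $Y_H$ are integral, and since $U_Y$ is a dense open subset of the $n$-dimensional variety $Y$, a general $H$ meets $U_Y$, so that $U_Y \cap H = U_Y \cap Y_H$ is non-empty (and of dimension $n-1$). Hence $U_{Y_H}$ is open in $Y_H$.

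Granting the non-emptiness of $U_{Y_H}$, the conclusion is immediate: for every $P \in U_{Y_H}$ one has $P \in \Delta'(X) \cap H$, the scheme $X_H$ is integral, and $P \notin T_{X_H}$ by construction, so Proposition \ref{hyperplane-galois2} yields $P \in \Delta'(X_H)$.

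The main obstacle is therefore the verification that $U_Y \cap H \not\subset T_{X_H}$. I would argue by contradiction: if $U_Y \cap H \subset T_{X_H}$, then the linear space $T_{X_H}$ contains a subset of dimension $n-1$, so $\dim T_{X_H} \geq n-1$. On the other hand $T_{X_H} \subset T_Q X_H$ for every smooth $Q \in X_H$, and each tangent hyperplane $T_Q X_H$ has dimension $n-1$, forcing $\dim T_{X_H}=n-1$ and $T_{X_H}=T_Q X_H$ at every smooth point $Q$. Since $Q \in T_Q X_H = T_{X_H}$ for all such $Q$, the dense smooth locus of $X_H$ is contained in the hyperplane $T_{X_H}$, hence so is $X_H$. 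But $X_H$ is irreducible of dimension $n-1$ sitting inside the $(n-1)$-plane $T_{X_H}$, so $X_H = T_{X_H}$, making $X_H$ a linear space of degree one, which contradicts $\deg X_H = d \geq 3$.
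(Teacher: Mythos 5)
Your proposal is correct and follows essentially the same route as the paper, whose entire proof is the single line ``take $U_{Y_H}=(U_Y \cap H) \setminus T_{X_H}$'' combined with the preceding remarks that, for general $H$, the sections $X_H$, $Y_H$ are integral and $U_Y \cap H \ne \emptyset$. The only difference is that you explicitly verify non-emptiness of $U_{Y_H}$ (ruling out $U_Y \cap H \subset T_{X_H}$ via the degree of $X_H$), a detail the paper leaves implicit; your argument for it is sound.
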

 
\begin{proof} 
Let $Y, U_Y$ be as in Theorem \ref{OuterMain}(i) and let $H$ be a general hyperplane. 
Since $H$ is general, we can assume that $U \cap H \ne \emptyset$.  
By Lemma \ref{hyperplane section}, $X_H$ and $Y_H$ are integral and $H$ satisfies the condition $(\star)$ for a general point $P \in Y_H$. 
By Theorem \ref{HyperplaneMain}(ii),  we can take $U_{Y_H}=(U_Y \cap H) \setminus T_{X_H}$.
\end{proof} 

\begin{lemma} \label{n=1}
Let $n=1$. 
Assume that there exists an irreducible plane curve $Y \subset \mathbb P^2$ and a non-empty open set $U_Y \subset Y$ such that $U_Y \subset \Delta'(X)$. 
Then, 
\begin{itemize}
\item[(1)] $Y$ is a line, and   
\item[(2)] if we take a linear transformation $\phi$ such that $\phi(Y)$ is defined by $X_0=0$, then the defining polynomial $\phi(X)$ is of the form $\sum_{j=0}^{e}\sum_{i=1}^{2}\alpha_{ij}x_i^{p^{j}}+c=0$, where $\alpha_{ij}, c \in K$. 
\end{itemize}
\end{lemma}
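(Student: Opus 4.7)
The plan is to reduce to Fukasawa and Hasegawa's characterization \cite{fukasawa-hasegawa} of plane curves with infinitely many outer Galois points. Since $U_Y$ is a non-empty Zariski open subset of the irreducible one-dimensional variety $Y$, it is infinite, hence $\Delta'(X)$ is infinite. Their theorem then forces $p>0$, $d=p^e$ for some $e>0$, and yields a projective equivalence taking $X$ to the normal form $\sum_{i,j}\alpha_{ij}X_0^{p^e-p^j}X_i^{p^j}=0$; moreover, in this model $\Delta'$ is a Zariski open subset of the single line $\{X_0=0\}$. Pulling back by this projective equivalence, $\Delta'(X) \subset L$ for some line $L \subset \Bbb P^2$. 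Assertion (1) follows at once: the irreducible curve $Y$ shares the infinite set $U_Y$ with $L$, and two distinct irreducible plane curves meet in finitely many points by B\'{e}zout, so $Y=L$.

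For (2), fix a linear transformation $\phi$ with $\phi(Y)=\{X_0=0\}$, and let $\psi$ be the linear transformation sending $\phi(X)$ to the normal form. Because $\phi(Y)$ and the distinguished line of the normal form are both $\{X_0=0\}$, $\psi$ preserves this line and therefore restricts on the affine chart $X_0\ne 0$ to an affine map $x \mapsto Ax+b$ with $A \in \mathrm{GL}_2(K)$ and $b \in K^2$. The key algebraic identity $(x+c)^{p^j}=x^{p^j}+c^{p^j}$ in characteristic $p$ shows that the class of polynomials of shape $\sum \alpha_{ij} x_i^{p^j}$ is closed under $\mathbb{F}_p$-linear substitutions in $(x_1,x_2)$, while the translation by $b$ contributes only an additive constant. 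Applying $\psi^{-1}$ to the affine equation of the normal form produces precisely the desired expression $\sum \alpha'_{ij}x_i^{p^j}+c=0$.

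The main obstacle is extracting from \cite{fukasawa-hasegawa} the precise geometric statement that in the normal form $\Delta'$ concentrates on the single line $\{X_0=0\}$; the description of the curve alone does not suffice, since we must identify $Y$ with a line. Once this localization is in hand, the remaining work is elementary Frobenius bookkeeping, and both conclusions follow immediately.
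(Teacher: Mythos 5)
Your proof is correct and follows essentially the same route as the paper: invoke the known classification of plane curves with infinitely many outer Galois points to place $\Delta'(X)$ on a line (forcing $Y$ to be that line) and to put $X$ in the additive normal form, then transport the equation by the line-preserving affine coordinate change, using additivity of Frobenius to see the shape $\sum_{i,j}\alpha_{ij}x_i^{p^j}+c$ is preserved. One correction: the input you need --- the normal form together with the statement that $\Delta'$ is concentrated on the single line $\{X_0=0\}$ --- is Fukasawa's classification \cite{fukasawa2} (Propositions 2 and 3 there, exactly as the paper cites), not \cite{fukasawa-hasegawa}, which treats \emph{inner} Galois points on singular plane curves; with that citation fixed, the ``main obstacle'' you flag is precisely what the cited result supplies, and your argument closes.
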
 

\begin{proof}
Let $(X_0:X_1:X_{2})$ be a system of homogeneous coordinates and let $x_i=X_{i}/X_0$ for $i=1, 2$. 
By a result of Fukasawa \cite{fukasawa2}, $Y$ is a line and there exists a linear transformation $\psi$ such that $\psi(X)$ is defined by $f:=\sum_{j=0}^{e}\sum_{i=1}^{2}\alpha_{ij}x_i^{p^{j}}=0$. 
Then $\psi(Y)$ is defined by $X_0=0$. 
Let $\phi$ be a linear transformation as in assumption (2). 
Then, $\phi(X)$ is given by $(\psi \circ \phi^{-1})^*f=0$. 
Let $\tilde{x}_i:=(\psi\circ\phi^{-1})^*x_i=\beta_{i0}+\beta_{i1}x_1+\beta_{i2}x_2$, where $\beta_{ij} \in K$, for $i=1, 2$.  
Then, $\phi(X)$ is given by $f(\tilde{x}_1, \tilde{x}_2)=0$, where $f(\tilde{x}_1, \tilde{x}_2)=\sum_{j=0}^{e}\sum_{i=1}^{2}\gamma_{ij}x_i^{p^{j}}+c$ for some $\alpha_{ij}, c \in K$. 
\end{proof}

\begin{proof}[Proof of Theorem \ref{OuterMain}] 
We consider the following condition ($P_n$): If $X \subset \mathbb P^{n+1}$ is an irreducible hypersurface, and there exists an irreducible hypersurface $Y \subset \mathbb P^{n+1}$ and a non-empty open set $U_Y$ of $Y$ such that $U_Y \subset \Delta'(X)$, then 
\begin{itemize}
\item[(1)] $Y$ is a hyperplane, and   
\item[(2)] if we take a linear transformation $\phi$ such that $\phi(Y)$ is defined by $X_0=0$, then the defining polynomial of $\phi(X)$ is of the form $\sum_{j=0}^{e}\sum_{i=1}^{n+1}\alpha_{ij}x_i^{p^{j}}+c=0$, where $\alpha_{ij}, c \in K$. 
\end{itemize}
We prove ($P_n$).  
We use induction on dimension $n$. 
If $n=1$, then the assertion is nothing but Lemma \ref{n=1}. 
We consider the case where $n \ge 2$. 
Let $H \subset \mathbb P^{n+1}$ be a general hyperplane. 
It follows from Corollary \ref{reduction2} that there exists a non-empty Zariski open set $U_{Y_H} \subset \Delta'(X_H)$. 
By the assumption of induction, $p>0$, $d$ is a power of $p$, and $X_H \subset H \cong \mathbb P^{n-1}$ is projectively equivalent to $\sum_{j=0}^{e}\sum_{i=1}^n\alpha_{ij}x_i^{p^j}+c=0$.  
Since $H$ is general, $Y$ is a hyperplane. 
We have result (1) of $(P_n)$. 

We take a linear transformation $\phi$ such that $\phi(Y)$ is defined by $X_{0}=0$. 
Let $P=(1:a_1:\ldots:a_{n+1}) \in X$ be a general point and let $H$ be a general hyperplane passing through $P$. 
Then, there exists an open set $U_{Y_H} \subset Y_H$ such that $U_{Y_H} \subset \Delta'(X_H)$. 
If we take a linear transformation $\psi=(X_0:X_1-a_1X_0:\cdots:X_{n+1}-a_{n+1}X_{0})$, then $\psi(P)=(1:0:\cdots:0)$, $\psi(\phi(Y))$ is defined by $X_0=0$ and $\psi(H)$ is defined by $X_{n+1}-b_1X_1+\cdots+b_nX_n=0$ for some $b_i \in K$.   
Let $F(X_0, \ldots, X_{n+1})$ be the defining polynomial of $\psi(X)$, let $f=F(1, x_1, \ldots, x_{n+1})$ and let $\tilde{x}=b_1x_1+\cdots+b_nx_n$. 
Then, $f(0, \ldots, 0)=0$ since $(1:0:\cdots:0) \in \psi(X)$.  
Since $\psi(X) \cap \psi(H)$ satisfies the condition $(P_{n-1})$ by induction, $g(x_1, \ldots, x_n):=f(1, x_1, \ldots, x_{n}, \tilde{x})=\sum_{i,j}\beta_{ij}x_i^{p^j}$ for some $\beta_{ij} \in K$, similar to the proof of Lemma \ref{n=1}.
If $f$ has a term of degree not equal to some power of $p$, $g$ has such a term for a general hyperplane $H$ with $P \in H$. 
Therefore, $f$ has only terms of degree equal to some power of $p$. 
Let $f_{p^i}$ be the component of $f$ of degree $p^i$ for $i=0, \ldots, e$. 
Then, $f_{p^i}(x_1, \ldots, x_n, \tilde{x})$ be the component of $g$ of degree $p^i$. 
By condition (2) of ($P_{n-1}$), $f_{p^i}(x_1, \ldots, x_n, \tilde{x})$ must be of the form $h^{p^i}$ for some linear polynomial $h(x_1, \ldots, x_n)$. 
Since $H$ is general, $f_{p^i}(x_1, \ldots, x_{n+1})$ must be of the form $h^{p^i}$ for some $h(x_1, \ldots, x_{n+1})$. 
Therefore, $\psi(X)$ is given by the polynomial as in condition (2) of ($P_n$) and hence, $\phi(X)$ also.

The implication (ii) $\Rightarrow$ (i) is derived from Proposition \ref{distribution} below. 
\end{proof}

\begin{proposition}[cf. \cite{fukasawa2}, Propositions 2 and 3] \label{distribution}
Let $X$ be an irreducible hypersurface defined by the equation in Theorem \ref{OuterMain}(ii) and let $H_0$ be the hyperplane defined by $X_0=0$. 
Then, we have the following. 
\begin{itemize}
\item[(i)] $S_X$ and $T_X$ are linear spaces of dimension $n-1$ which are contained in $H_0$. 
\item[(ii)] $\Delta'(X)=H_0\setminus (S_X \cup T_X)$, and all points in $S_X \setminus T_X$ are Galois. 
(Here, we consider a point $P$ with $\pi_P^*(\mathbb{P}^n)=K(X)$ as a Galois point.)
\item[(iii)] For any Galois point $P \in H_0 \setminus T_X$, any birational map induced by $G_P$ is a restriction of a linear transformation of $\mathbb P^2$. 
\item[(iv)] For any Galois point $P \in H_0 \setminus T_X$, the Galois group $G_P$ is isomorphic to $(\mathbb Z/p\mathbb Z)^{\oplus m}$ for some $m \le e$. 
\item[(v)] $\Delta(X)=X_{\rm sm}$ if $X$ is projectively equivalent to the hypersurface defined by $X_0^{p^e-1}X_1-X_2^{p^e}=0$, and $\Delta(X)=\emptyset$ otherwise. 
\end{itemize}
\end{proposition}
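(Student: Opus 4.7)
The plan is to compute $S_X$ and $T_X$ directly from the partial derivatives of $F$, then for (ii) translate a candidate Galois point $P$ on $H_0$ to $(0:\cdots:0:1)$ via a linear change of coordinates that preserves $H_0$ and the additive-polynomial form of $F$, and read the Galois structure off the resulting $p$-polynomial extension. The main obstacle, I expect, is the reverse inclusion $\Delta'(X)\subset H_0$, which I intend to handle inductively via Proposition \ref{hyperplane-galois2}.

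For (i), the Frobenius annihilates most contributions: $(p^e-p^j)\equiv 0\pmod p$ for $j\geq 1$, and the derivative of $X_i^{p^j}$ in $X_i$ vanishes for $j\geq 1$, so only $j=0$ survives, giving $\partial F/\partial X_i=\alpha_{i0}X_0^{p^e-1}$ for $i\geq 1$ and $\partial F/\partial X_0=-\sum_i\alpha_{i0}X_0^{p^e-2}X_i$. Setting these to zero shows $S_X\subset H_0$ (using irreducibility to ensure some $\alpha_{i0}\neq 0$), and the restriction of $F$ to $H_0$ leaves $\sum_i\alpha_{ie}X_i^{p^e}$, a $p^e$-th power of a single linear form, so $S_X$ is an $(n-1)$-plane in $H_0$. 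For $T_X$, any $Q\in T_X$ makes $\sum_k Q_k\,\partial F/\partial X_k$ vanish on $X$; since this polynomial has degree $<d$ it vanishes identically, and comparing coefficients forces $Q_0=0$ and $\sum_i\alpha_{i0}Q_i=0$, another $(n-1)$-plane in $H_0$.

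For (ii)--(iv), the key observation is that any linear change $Y=AX$ with $Y_0=X_0$ preserves the form of $F$: by Frobenius, $X_i^{p^j}=\sum_k b_{ik}^{p^j}Y_k^{p^j}$ where $B=A^{-1}$, so $F$ becomes $cY_0^{p^e}+\sum_{k\geq 1,j}\beta_{kj}Y_0^{p^e-p^j}Y_k^{p^j}$ with $\beta_{kj}=\sum_i\alpha_{ij}b_{ik}^{p^j}$. Given $P=(0:p_1:\cdots:p_{n+1})\in H_0$, I choose $A$ so $P$ becomes $(0:\cdots:0:1)$, whence $b_{i,n+1}=p_i$ and $\beta_{n+1,0}=\sum_i\alpha_{i0}p_i$, $\beta_{n+1,e}=\sum_i\alpha_{ie}p_i^{p^e}$. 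The projection extension $K(y_1,\ldots,y_{n+1})/K(y_1,\ldots,y_n)$ is defined by $M(y_{n+1})=g(y_1,\ldots,y_n)$ with $M(t)=\sum_j\beta_{n+1,j}t^{p^j}$ an additive polynomial. This is Galois of degree $p^e$ precisely when $\beta_{n+1,e}\neq 0$ (degree) and $\beta_{n+1,0}\neq 0$ (separability); by (i) these conditions are exactly $P\notin S_X$ and $P\notin T_X$. The Galois action is then $y_{n+1}\mapsto y_{n+1}+a$ for $a\in\ker M\subset K$, a restriction of a linear transformation of $\Bbb P^{n+1}$, and $\ker M$ is an $\Bbb F_p$-subspace of dimension $e$ since $M$ is separable of degree $p^e$; this yields $H_0\setminus(S_X\cup T_X)\subset\Delta'(X)$ together with (iii) and (iv).

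For the reverse $\Delta'(X)\subset H_0$ I use induction on $n$ with base case $n=1$ from \cite{fukasawa2}: if $P\in\Delta'(X)\setminus H_0$, take a general hyperplane $H\ni P$; by Proposition \ref{hyperplane-galois2}, $P\in\Delta'(X_H)$, and substituting $X_{n+1}=\sum a_k X_k$ into $F$ shows $X_H$ is again of the Theorem \ref{OuterMain} form with canonical hyperplane $H\cap H_0$, so induction forces $P\in H\cap H_0\subset H_0$, a contradiction. For (v), if $P\in\Delta(X)\neq\emptyset$, cutting by a general hyperplane through $P$ and applying Proposition \ref{hyperplane-galois1} repeatedly reduces to the $n=1$ case, where \cite{fukasawa2} identifies $X$ with $X_0^{p^e-1}X_1-X_2^{p^e}=0$; the dimension bookkeeping for $n\geq 2$ then follows along the lines of the proof of Theorem \ref{InnerMain}, and the converse direction is an immediate consequence of Theorem \ref{InnerMain}.
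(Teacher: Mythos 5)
Your computations for (i), (iii), (iv) and for the inclusion $H_0\setminus(S_X\cup T_X)\subset\Delta'(X)$ are correct and essentially coincide with the paper's proof: both normalize the candidate point $P\in H_0$ by a linear change preserving $X_0$, exploit the additive form of $F$, and read the Galois structure off the resulting equation $M(y_{n+1})=g$ with $M$ an additive polynomial; your explicit remark that failure of separability is exactly $P\in T_X$ and degree drop is exactly $P\in S_X$ is a slightly more self-contained version of the paper's citation of Stichtenoth.

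The genuine gap is in your proof of $\Delta'(X)\subset H_0$ and in (v): both times you cut $X$ by a \emph{general hyperplane through a fixed point} $P$ and then invoke Propositions \ref{hyperplane-galois2} resp.\ \ref{hyperplane-galois1}, but those propositions require $X_H$ to be an integral scheme, and the only Bertini statement available, Lemma \ref{hyperplane section}, concerns a general hyperplane with no base-point condition. This is not a formality in characteristic $p$: the non-integral locus in $\check{\Bbb P}^{n+1}$ can be a hypersurface, so it may a priori contain the whole $\Bbb P^{n}$ of hyperplanes through a fixed special point (e.g.\ every hyperplane section of a cone over a plane curve through its vertex is a union of lines). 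This difficulty is precisely what the paper's proof is engineered to avoid: it first uses the Galois groups of the already-established points of $\Delta'(X)\cap H_0$, which act by linear transformations fixing $H_0$ pointwise, to sweep the single hypothetical Galois point $P\notin H_0$ into an infinite (positive-dimensional) family of Galois points off $H_0$; only then can one choose a \emph{genuinely general} hyperplane, covered by Lemma \ref{hyperplane section}, that passes through one of these points and still meets $\Delta'(X)\cap H_0$, after which Proposition \ref{hyperplane-galois2} and induction (base case $n=1$ from \cite{fukasawa2}) give the contradiction. Your route for (ii) could be repaired, since $P\notin X$ makes $\pi_P$ a finite morphism onto $\Bbb P^n$ with $n\ge 2$, so a Jouanolou-type Bertini theorem yields irreducibility of the general section through $P$ and a Gauss-map argument excludes a multiple component; but none of this appears in your proposal, and you also never verify the hypothesis $P\notin T_{X_H}$ of Proposition \ref{hyperplane-galois2}. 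For (v) the problem is compounded: there $P\in X$, so even the repair above does not apply directly, and the final step ``the dimension bookkeeping for $n\ge2$ follows along the lines of the proof of Theorem \ref{InnerMain}'' is not a proof—that argument reconstructs $X$ from sections by hyperplanes without base points. The paper's (v) is both shorter and complete: given one inner Galois point $P$, pick a general $P'\in X$, let $R=\overline{PP'}\cap H_0\in\Delta'(X)$, and use Lemma \ref{transitive} together with the linearity of the elements of $G_R$ to produce $\sigma\in G_R$ with $\sigma(P)=P'$; thus a dense set of points of $X$ is inner Galois, and Theorem \ref{InnerMain} applies verbatim.
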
 

\begin{proof} 
Let $F=\sum_{j=0}^{e}\sum_{i=1}^{n+1}\alpha_{ij}X_0^{p^e-p^j}X_i^{p^j}$ be the defining polynomial. 
It is not difficult to check that $S_X$ is given by $X_0=\sum_{i=1}^{n+1}\alpha_{ie}X_i^{p^e}=0$ and $T_X$ is given by $X_0=\sum_{i=1}^{n+1}\alpha_{i0}X_i=0$.   
Therefore, we have $(i)$. 

We prove that all points in $H_0 \setminus T_X$ are Galois. 
There exists $i$ such that $\alpha_{i0} \ne 0$. 
We can assume that $i=1$. 
Let $\hat{X}_1=\sum_{i=1}^{n+1}\alpha_{i0}X_i$. 
Then, $\phi(X_0, \ldots, X_{n+1})=(X_0, \hat{X_1}, X_2 \ldots, X_{n+1})$ is a linear transformation and $\phi(T_X)$ is given by $X_0=X_1=0$. 
By considering $\phi(X)$ as $X$, we can assume that $T_X$ is given by $X_0=X_1=0$. 
Let $P \in H_0 \setminus T_X$. 
Then, $P=(0:1:b_2:\cdots: b_{n+1})$ for some $b_2, \ldots, b_{n+1}$. 
The projection $\pi_P$ is given by $(1:x_2-b_2x_1:\cdots:x_{n+1}-b_{n+1}x_1)$. 
Let $\hat{x}_i=x_i-b_ix_1$. 
Then, we have a field extension $K(x_1, \hat{x}_2, \ldots, \hat{x}_{n+1})/K(\hat{x}_2, \ldots, \hat{x}_{n+1})$ with a relation $g(x_1, \hat{x}_2, \ldots, \hat{x}_{n+1})=F(1, x_1, \hat{x}_2+b_2x_1, \ldots, \hat{x}_{n+1}+b_{n+1}x_1)=0$.  
By the form of $F$, $g$ is of the form $\sum_{i,j}\beta_{ij} x_i^{p^j}$ for some $\beta_{ij} \in K$. 
Since $P \not\in T_X$, this extension is Galois of degree $p^m$ for some $m \le e$ and the Galois group is isomorphic to $(\mathbb Z/p\mathbb Z)^{\oplus m}$ (see \cite[pp. 117--118]{stichtenoth}). 
Therefore, we have $H_0 \setminus (S_X \cup T_X) \subset \Delta'(X)$, and any point in $S_X \setminus T_X$ is Galois. 
By considering the form $g$, we find that assertions (iii) and (iv) hold for all Galois points in $H_0\setminus T_X$. 

We prove that $\Delta'(X) \subset H_0 \setminus (S_X \cup T_X)$. 
If this is proved, then we have (ii), (iii) and (iv). 
If $n=1$, then this is a result of Fukasawa (see \cite[Proposition 2]{fukasawa2}). 
Assume that $n \ge 2$ and there exists an outer Galois point $P \not\in H_0$ for $X$. 
By Theorem \ref{HyperplaneMain}, there exists a hyperplane $H \ni P$ such that $\{P\} \cup (\Delta'(X) \cap H_ 0 \cap H) \setminus T_{X_H} \subset \Delta'(X_H)$. 
When $n=2$, $X_H$ is a curve and this is a contradiction.  
By using induction, we have a contradiction for any $n \ge 2$.  

We prove (v). 
Assume that $P \in \Delta(X)$. 
Since $X \cap H_0=S_X$, $P \in X \setminus H_0$. 
Let $P' \in X \setminus H_0$ such that the line $\overline{PP'}$ intersects the set $\Delta'(X)$. 
Let $R \in \overline{PP'} \cap \Delta'(X)$. 
Since any element of $G_R$ is a linear transformation, it follows from Lemma \ref{transitive} that there exists $\sigma \in G_R$ such that $\sigma(P)=P'$. 
Since $P$ is Galois, $P'$ is also Galois. 
Therefore, if one inner Galois point exists, then almost all points of $X$ are inner Galois points. 
It follows from Theorem \ref{InnerMain} that $X$ is projectively equivalent to the hypersurface defined by $X_0^{p^e-1}X_1-X_2^{p^e}=0$ and $\Delta(X)=X\setminus \{X_0=X_1=0\}=X_{\rm sm}$.  
\end{proof}

\end{document}